\newtheorem{thm}{Theorem}[section]
\newtheorem{prp}[thm]{Proposition}
\newtheorem{rem}[thm]{Remark}
\theoremstyle{definition}
\newtheorem{defn}{Definition}[section]
\newcommand{\scr}[1]{\mathscr #1}
\definecolor{wco}{rgb}{0.5,0.2,0.3}
\numberwithin{equation}{section} \theoremstyle{remark}
\newcommand{\ua}{\uparrow}
\title{{\bf Comparison theorem for neutral stochastic functional   differential
equations driven by      $G$-Brownian motion}
 }
\author{
{\bf      Fen-Fen Yang$^{a)}$, Chenggui Yuan$^{b)}$ }\\
\footnotesize{  a)Department of Mathematics, Shanghai University, Shanghai 200444, China}\\
\footnotesize{  b)Department of Mathematics, Swansea University, Swansea SA1 8EN, UK}\\
\footnotesize{ 
yangfenfen@shu.edu.cn,  C.Yuan@swansea.ac.uk}
}
\begin{document}
\allowdisplaybreaks
\def\R{\mathbb R}  \def\ff{\frac} \def\ss{\sqrt} \def\B{\mathbf
B} \def\W{\mathbb W}
\def\N{\mathbb N} \def\kk{\kappa} \def\m{{\bf m}}
\def\ee{\varepsilon}\def\ddd{D^*}
\def\dd{\delta} \def\DD{\Delta} \def\vv{\varepsilon} \def\rr{\rho}
\def\<{\langle} \def\>{\rangle} \def\GG{\Gamma} \def\gg{\gamma}
  \def\nn{\nabla} \def\pp{\partial} \def\E{\mathbb E}
\def\d{\text{\rm{d}}} \def\bb{\beta} \def\aa{\alpha} \def\D{\scr D}
  \def\si{\sigma} \def\ess{\text{\rm{ess}}}
\def\beg{\begin} \def\beq{\begin{equation}}  \def\F{\scr F}
\def\Ric{\text{\rm{Ric}}} \def\Hess{\text{\rm{Hess}}}
\def\e{\text{\rm{e}}} \def\ua{\underline a} \def\OO{\Omega}  \def\oo{\omega}
 \def\tt{\tilde} \def\Ric{\text{\rm{Ric}}}
\def\cut{\text{\rm{cut}}} \def\P{\mathbb P} \def\ifn{I_n(f^{\bigotimes n})}
\def\C{\scr C}      \def\aaa{\mathbf{r}}     \def\r{r}
\def\gap{\text{\rm{gap}}} \def\prr{\pi_{{\bf m},\varrho}}  \def\r{\mathbf r}
\def\Z{\mathbb Z} \def\vrr{\varrho} \def\ll{\lambda}
\def\L{\scr L}\def\Tt{\tt} \def\TT{\tt}\def\II{\mathbb I}
\def\i{{\rm in}}\def\Sect{{\rm Sect}}  \def\H{\mathbb H}
\def\M{\scr M}\def\Q{\mathbb Q} \def\texto{\text{o}} \def\LL{\Lambda}
\def\Rank{{\rm Rank}} \def\B{\scr B} \def\i{{\rm i}} \def\HR{\hat{\R}^d}
\def\to{\rightarrow}\def\l{\ell}\def\iint{\int}
\def\EE{\scr E}\def\Cut{{\rm Cut}}
\def\A{\scr A} \def\Lip{{\rm Lip}}
\def\BB{\scr B}\def\Ent{{\rm Ent}}\def\L{\scr L}
\def\R{\mathbb R}  \def\ff{\frac} \def\ss{\sqrt} \def\B{\mathbf
B}
\def\N{\mathbb N} \def\kk{\kappa} \def\m{{\bf m}}
\def\dd{\delta} \def\DD{\Delta} \def\vv{\varepsilon} \def\rr{\rho}
\def\<{\langle} \def\>{\rangle} \def\GG{\Gamma} \def\gg{\gamma}
  \def\nn{\nabla} \def\pp{\partial} \def\E{\mathbb E}
\def\d{\text{\rm{d}}} \def\bb{\beta} \def\aa{\alpha} \def\D{\scr D}
  \def\si{\sigma} \def\ess{\text{\rm{ess}}}
\def\beg{\begin} \def\beq{\begin{equation}}  \def\F{\scr F}
\def\Ric{\text{\rm{Ric}}} \def\Hess{\text{\rm{Hess}}}
\def\e{\text{\rm{e}}} \def\ua{\underline a} \def\OO{\Omega}  \def\oo{\omega}
 \def\tt{\tilde} \def\Ric{\text{\rm{Ric}}}
\def\cut{\text{\rm{cut}}} \def\P{\mathbb P} \def\ifn{I_n(f^{\bigotimes n})}
\def\C{\scr C}      \def\aaa{\mathbf{r}}     \def\r{r}
\def\gap{\text{\rm{gap}}} \def\prr{\pi_{{\bf m},\varrho}}  \def\r{\mathbf r}
\def\Z{\mathbb Z} \def\vrr{\varrho} \def\ll{\lambda}
\def\L{\scr L}\def\Tt{\tt} \def\TT{\tt}\def\II{\mathbb I}
\def\i{{\rm in}}\def\Sect{{\rm Sect}}  \def\H{\mathbb H}
\def\M{\scr M}\def\Q{\mathbb Q} \def\texto{\text{o}} \def\LL{\Lambda}
\def\Rank{{\rm Rank}} \def\B{\scr B} \def\i{{\rm i}} \def\HR{\hat{\R}^d}
\def\to{\rightarrow}\def\l{\ell}
\def\8{\infty}\def\I{1}\def\U{\scr U}
\maketitle

\begin{abstract} In this paper, we   investigate  sufficient and necessary conditions    for the comparison theorem of  neutral stochastic functional   differential
equations driven by      $G$-Brownian motion ($G$-NSFDE). 
 Moreover, the results extend the ones in the linear expectation case \cite{BJ} and  nonlinear expectation framework \cite{HYang}.
\end{abstract} \noindent
 AMS subject Classification:\  60H10, 60G65.   \\
\noindent
 Keywords: Comparison theorem, $G$-Brownian motion, Neutral stochastic delay differential
equations.
 \vskip 2cm

\section{Introduction}

Since the order preservation has been introduced for studying complex stochastic models by compared   with  a simpler ones, this property
also called as ``comparison theorem" in the literature. Two types of
comparison theorems have been extensively investigated: the distribution (weak) sense and  the pathwise (strong) sense respectively, such as
Chen-Wang \cite{CW} for multidimensional diffusion processes,  Wang \cite{W1} for superprocesses 
in the distribution (weak) sense; Ikeda-Watanabe \cite{IW} for stochastic differential
equations, Yang-Mao-Yuan \cite{YMY} for one-dimensional stochastic hybrid systems  in  the pathwise (strong) sense.
Moreover, the comparison theorem in
  the pathwise (strong) sense  implies the distribution (weak) sense. We refer readers to see \cite{
HW, IW, 
 PZ, YMY
 }, and the references therein.

Recently, there are some   results for comparison
theorem on stochastic differential delay equations, such as, Bao-Yuan \cite{BY} established  a comparison theorem for stochastic differential delay
equations with jumps,  Bai-Jiang \cite{BJ} proved the comparison theorem   for stochastic functional differential equations whose drift term satisfies the quasimonotone condition and diffusion term is independent of
delay. Moreover, Huang-Yuan \cite{HY}    provided  sufficient   and necessary conditions such that the order preservation holds for distribution-dependent neutral
stochastic functional differential equations.

Furthermore, under the framework of nonlinear expectation,  some sufficient condition is presented in
\cite[Theorem 7.1]{Lin} for comparison theorem of one-dimensional stochastic differential
equations driven by $G$-Brownian motion ($G$-SDEs). In addition,
Luo-Wang  \cite{
LW, LW1}    prove a comparison theorem for multidimensional $G$-SDEs.
There are some results for comparison theorem of stochastic functional differential equations  driven by $G$-Brownian ($G$-SFDEs); see \cite{HYang}, where the representation of the $G$-expectation \eqref{rep2}
  introduced in \cite{15, HP
  } plays important role in the proof of necessary condition of the comparison theorem.

Motivated by the above significant works, we shall investigate the  order preservation for neutral stochastic functional differential equations  driven by $G$-Brownian ($G$-NSFDEs). To achieve this, thanks the work completed by He-Han \cite{HH}, the authors studied existence and stability of solutions to  $G$-NSFDEs.

The paper is organized as follows. In Section 2, we recall some preliminaries on $G$-Brownian motion and its related stochastic calculus.
In Section 3, we prove the comparison theorem   for the $G$-NSFDEs.
\section{$G$-Expectation and $G$-Brownian motion}
  For a matrix $A$,  let   $A^\ast$ be its transpose and  $|A|=\sqrt{ {\rm {trace}}(AA^\ast)}$.
   Let $\mathbb{M}^m$ be the collection of all $m\times m$ matrices and $\mathbb{S}^m$ ($\mathbb{S}_+^m$) be the set of the symmetric (symmetric and positive definite) ones in $\mathbb{M}^m$.
   For    $M, \bar{M}\in \mathbb{M}^m,$
   define $\<M,\bar{M}\>=\sum_{k,l=1}^mM _{kl}\bar{M} _{kl}.$ For $X,\bar{X}\in\mathbb{S}^m$, the notation  $X\geq \bar{X}$ (resp., $X> \bar{X}$) means that  $X- \bar{X}$
 is non-negative (resp., positive) definite.
Fix two positive constants $\underline{\sigma}$ and $\bar{\sigma}$ with $\underline{\sigma}<\bar{\sigma}$, define
\begin{equation}\label{G(A)}
 G(X)=\frac{1}{2}\sup _{\gamma\in \mathbb{S}_+^m\bigcap[\underline{\sigma}^2\textbf{I}_{m\times m}, \bar{\sigma}^2\textbf{I}_{m\times m}]}\<\gamma,X\>, \ X\in\mathbb{S}^m.
\end{equation}
From the definition of the function $G$, it has the following properties (\cite[Chapter 2]{peng4}):
\begin{enumerate}
\item[(a)] (Positive homogeneity) $G(\lambda X)=\lambda G(X)$, $\lambda\geq 0,  X\in\mathbb{S}^m$.
\item[(b)] (Sub-additivity) $G(X+\bar{X})\leq G(X)+G(\bar{X})$, $G(X)-G(\bar{X})\leq G(X-\bar{X})$,\ \   $X,\bar{X}\in\mathbb{S}^m$.
\item[(c)] $|G(X)|\leq \frac{1}{2}|X|\sup _{\gamma\in \mathbb{S}_+^m\bigcap[\underline{\sigma}^2\textbf{I}_{m\times m}, \bar{\sigma}^2\textbf{I}_{m\times m}]}|\gamma|=\frac{1}{2}|X|\sqrt{m}\bar{\sigma}^2.$
\item[(d)] $G(X)-G(\bar{X})\geq \frac{\underline{\sigma}^2}{2} \mbox{trace}[X-\bar{X}],\ X\geq \bar{X}, X,\bar{X} \in \mathbb{S}^m.$
\end{enumerate}
\begin{rem}\label{coG}
(b) and (c) imply that   $G$ defined by \eqref{G(A)} is continuous under   $|\cdot|$.
\end{rem}
For convenience, we list some notations and spaces appearing in this paper as
follows.
 \begin{itemize}

  \item   $\Omega=C_0([0,\infty);\mathbb{R}^m)$, the $\mathbb{R}^m$-valued and continuous functions on $[0,\infty)$ vanishing at zero, which is endowed with the   metric
$$
\rho(\omega^1,\omega^2):=\sum_{n=1}^\infty\frac{1}{2^n}\left[\max_{t\in[0,n]}|\omega^1_t-\omega^2_t|\wedge1 \right],  \omega^1,\omega^2\in\Omega.
$$
\item   $C_{b,lip}(\mathbb{R}^{m})$ is  the set of bounded and Lipschitz continuous functions on $\mathbb{R}^{m}$.
     \item
$L_{ip}(\Omega_T)  =\{  \varphi(\omega_{t_1}, \cdot \cdot \cdot, \omega_{t_n}), n\in \mathbb{N}^+, t_1,\cdot \cdot \cdot, t_n\in [0,T],\varphi \in C_{b,lip}((\mathbb{R}^{m})^n)\}.$

 \item  $L_{ip}(\Omega)=\bigcup_{T>0}L_{ip}(\Omega_T).$
 \end{itemize}

\begin{itemize}
  \item  $\bar{\E}^G$ is the nonlinear expectation on $\Omega$ such that coordinate process $(B(t))_{t\geq 0}$, i.e. $B(t)(\omega)=\omega_t, \omega\in \Omega$  is an $m$-dimensional $G$-Brownian motion.
    \item   $\<B\>$ is the quadratic variation process of $B$.
  \item  $L_G^p(\Omega)$ is the completion of $L_{ip}(\Omega)$  under the norm $(\bar{\E}^G|\cdot|^p)^{\frac{1}{p}}$, $p\geq1$.
\item
$M_{G}^{p,0}([0,T])=
\Big\{ \sum_{j=0}^{N-1} \eta_{j} I_{[t_j, t_{j+1})}(t);
\eta_{j}\in L_{G}^p(\Omega_{t_{j}}), 0=t_0<t_1<\cdots <t_N=T \Big\}.
$
\item $M_G^p([0,T])$ is the completion of $M_G^{p,0}([0,T])$ under
 $\|\xi\|_{M_G^p([0,T])} =\left(\bar{\E}^G\int_{0}^{T}|\xi_{t}|^p\d t\right)^{\frac{1}{p}}.$
\item $\mathcal{M}$ is the collection of all probability measures on  $(\Omega, \B(\Omega))$.
 \end{itemize}
For the construction of nonlinear expectation $\bar{\E}^G$, one can refer to \cite{peng4} for more details. According to \cite{15,HP},    there exists a weakly compact subset $\mathcal{P}\subset \mathcal{M}$   such that
\begin{align}\label{rep}\bar{\E}^G[X]=\sup_{\P\in \mathcal{P}}\mathbb{E}_\P[X], \ X\in L_G^1(\Omega),\end{align}
where $\mathbb{E}_\P$ is the linear expectation under probability measure $\P \in \mathcal{P}$. $\mathcal{P}$ is called a set that represents $\bar{\E}^G$.
To see \eqref{rep} more clear, let $W^0$ be an $m$-dimensional Brownian motion on a complete filtered probability space $(\hat{\OO},\{\F_t\}_{t\geq 0},\P)$, and define
\begin{align*}\mathbb{H}:=\{\theta: \ \ &\theta\  \text{is an}\  \mathbb{M}^m\text{-valued progressively measurable}\\
& \text{stochastic process,} \
\theta_s\theta_s^\ast\in[\underline{\sigma}^2\textbf{I}_{m\times m}, \bar{\sigma}^2\textbf{I}_{m\times m}], \   s\geq 0\}.
 \end{align*}
 For any $\theta\in \mathbb{H}$, let $\P_{\theta}$ be the law of $\int_0^\cdot \theta_s \d W^0_s$. By \cite{15,HP}, taking $\mathcal{P}= \{\P_\theta, \theta\in\mathbb{H}\}$, then
\begin{align}\label{rep2}
\bar{\E}^G[X]=\sup_{\theta\in \mathbb{H}}\mathbb{E}_{\P_\theta}[X], \ X\in L_G^1(\Omega).
\end{align}
Define $\mathcal{C}(A)=\sup_{\P\in \mathcal{P}}\P(A), \ A\in \B(\Omega)$ the associated  Choquet capacity to
$\bar{\E}^G$.
We call a set $A\in\B(\Omega)$ is   polar   if $\mathcal{C}(A)=0$,  and   a property   holds  $\mathcal{C}$-quasi-surely ($\mathcal{C}$-q.s.)
if it holds outside a polar set. 
 Moreover,  for the quadratic variation process $\<B\>$,
 by \cite[Corollary 5.7]{peng4} and the property (d) of  the function $G$, we have $\mathcal{C}$-q.s.
\begin{align}\label{B}
\underline{\sigma}^2\textbf{I}_{m\times m}\leq\frac{\d}{\d t}\langle B\rangle(t)\leq\bar{\sigma}^2\textbf{I}_{m\times m}.
\end{align}

\section{$G$-NSDE}

For a fixed  constant $r_0\geq 0$, let
$\C=C([-r_0,0];\R^d)$, which   equipped with uniform norm $\|\cdot\|_\infty$.
For any continuous map $f: [-r_0,\infty)\to \R^d$  and
$t\ge 0$,  let  $f_t\in\C$ be such that $f_t(s)=f(s+t)$ for $s\in
[-r_0,0]$. Generally,    $(f_t)_{t\ge 0}$ is called the segment of $(f(t))_{t\ge -r_0}.$
Let $x=(x^1,\cdots, x^d)$, $ y=(y^1,\cdots, y^d)\in\R^d$, we call $x\le y$ if $x^i\le y^i$ holds for all $1\le i\le d.$  Now  we introduce the partial-order on $\C.$ Let $\xi=(\xi^1,\cdots,\xi^d)$, $\eta=(\eta^1,\cdots,\eta^d)\in\C$.
 \beg{itemize}
\item We call $\xi\le \eta$ if $\xi^i(s)\le \eta^i(s)$ holds for all $s\in [-r_0,0]$ and $1\le i\le d.$
    \item
     We call $\xi\le_{N} \eta$ if $\xi\le \eta$ and $\xi(0)-N(\xi)\le \eta(0)-N(\eta)$.
\item  For any $t\in [-r_0,0]$, define $(\xi\land\eta)^i(t)=\min\{\xi^i(t),\eta^i(t)\}, \ \  1\le i\le d,$ then
 $\xi\land\eta\in \C$.

\end{itemize}
Consider the following   $G$-NSFDE:
\begin{equation}\label{E1}
\begin{cases}
\d [Y(t)-N(Y_t)]= b(t,Y_t)\,\d t+ \<h(t,Y_t),\d \<B\>(t)\> +\sigma(t,Y_t)\,\d B(t),\\
  \d [\bar{Y}(t)- {N}(\bar{Y}_t)]= \bar{b}(t,\bar{Y}_t)\,\d t+ \<\bar{h}(t,\bar{Y}_t),\d \<B\>(t)\>+ {\sigma}(t,\bar{Y}_t)\,\d B(t),
\end{cases}
\end{equation}
where $N:\C\to \R^d$ is called neutral term, $h=(h^{ij})_{1\leq i,j\leq m}$ with $h^{ij}=h^{ji}$, $\bar{h}=(\bar{h}^{ij})_{1\leq i,j\leq m}$ with $\bar{h}^{ij}=\bar{h}^{ji}$, and $b,\bar{b}, N: [0,\infty)\times \C\to \R^d; h^{ij},\bar{h}^{ij}: [0,\infty)\times \C\to \R^d;
 \si,\bar{\sigma}: [0,\infty)\times \C\to \R^d\otimes\R^m$
are   measurable.

\begin{defn}For any $s\ge0$ and $\xi,\bar\xi\in\C$, we call $(Y(t),\bar Y(t))_{t\geq s}$
 solve    (\ref{E1})    if for all $t\ge s,$
\beg{equation*}
\beg{split}
Y(t)-N(Y_t) = &\xi(0)-N(\xi)+ \int_s^t  b(r,Y_r)\d r+\int_s^t \<h(r,Y_r),\d \<B\>(r)\> +\int_s^t \si(r, Y_r)\d B(r ),\\
\bar Y(t)- {N}(\bar{Y}_t) = &\bar\xi(0)- {N}(\bar\xi)+ \int_s^t \bar b(r,\bar Y_r)\d r+\int_s^t \<\bar{h}(r,\bar{Y}_r),\d \<B\>(r)\>+ \int_s^t\bar \si(r,\bar Y_r)\d B(r),
\end{split}\end{equation*} where  $(Y_t, \bar Y_t)_{t\ge s}$ is the segment process of $(Y(t), \bar Y(t))_{t\ge s-r_0}$ with
$(Y_s,\bar Y_s)= (\xi,\bar\xi)$.
\end{defn}
Throughout the paper, we make   the following assumptions.
\beg{enumerate} \item[{\bf (H1)}]
There exists an increasing function $\aa: \R_+\to \R_+$     such that for any $ t\ge 0, \xi,\eta\in \C$,
 \begin{align*}
&|b(t,\xi)- b(t,\eta)|^2+|\bar{b}(t,\xi)- \bar{b}(t,\eta)|^2+|h(t,\xi)- h(t,\eta)|^2+|\bar{h}(t,\xi)- \bar{h}(t,\eta)|^2\\
&+|\si(t,\xi)- \si(t,\eta)|^2+|\bar{\si}(t,\xi)- \bar{\si}(t,\eta)|^2\le \aa(t)\|\xi-\eta\|_{\infty}^2.
\end{align*}
\item[{\bf (H2)}] There exists a  constant $K  $ such that
 \begin{align*}
  &|b(t,0)|^2+ |\bar b(t,0)|^2+|h(t,0)|^2+ |\bar h(t,0)|^2+\|\si(t,0)\|_{HS}^{2}+|\bar\si(t,0)|^{2}
\le K,\ \ t\ge 0.
 \end{align*}
\item[{\bf (H3)}] $N(0)=0$ and $N(\xi) \leq N(\eta)$ for $\xi\leq \eta$.
 \item[{\bf (H4)}] There exists a constant $\kappa\in (0, 1)$ such that
\begin{equation*}
|N(\xi) -N(\eta)|\leq \kappa \max_{1\leq i\leq d}\|\xi^i-\eta^i\|_{\infty}, \  \xi,\eta\in \C.
\end{equation*}
\end{enumerate}
\begin{rem}\label{remboun}
According to \cite[Lemma 2.1]{HH},  {under {\bf(H1)}-{\bf(H4)},}  for any $s\ge 0$ and $\xi, \bar\xi\in \C$,  $G$-NSFDE  \eqref{E1}   has a unique solution
  $\{Y(s,\xi;t), \bar Y(s,\bar \xi;t)\}_{t\ge s-r_0}$     with $(Y_s,\bar Y_s)=(\xi,\bar\xi)$.
Moreover, the segment processes $\{Y(s,\xi)_t, \bar Y(s,\bar\xi)_t\}_{t\ge s}$ satisfy
\beq\label{BDD} \bar{\E}^G \sup_{t\in [s,T]} \big(\|Y(s,\xi)_t\|_\infty^2+ \|\bar Y(s,\bar \xi)_t\|_\infty^2\big)<\infty,\ \ T\in [s,\infty).
\end{equation}
\end{rem}

\beg{defn} The $G$-NSFDE  $(\ref{E1})$ is called $N$-order-preserving, if  for any $s\ge 0$ and $\xi, \bar\xi\in \C$ with  $\xi\le_N\bar\xi$,
it holds  $\mathcal{C}$-q.s.   $$Y(s,\xi)_t\le_N \bar Y(s,\bar\xi)_t,\ t\ge s.$$   \end{defn}
We first state the following sufficient conditions for the order preservation, which reduce back to the corresponding ones in \cite{BJ} when the noise is an $m$-dimensional standard Brownian motion  and in \cite{HY} when the system with distribution independent.

\beg{thm}\label{T1.1} Assume {\bf (H1)}-{\bf (H4)} and the following  two conditions are satisfied:
\beg{enumerate}
\item[$\textbf{(A1)}$] For any  $1\leq i\leq d$, $\xi,\eta\in\C$ with   { $\xi\leq_N \eta$ and $\xi^i(0)-N^i(\xi)=\eta^i(0)-N^i(\eta)$,}
    \begin{equation*}
b^i(t,\xi)-\bar{b}^i(t, \eta)+2G(h^i(t,\xi)-\bar{h}^i(t, \eta))\leq 0,\ \ \text{a.e.}\ t\ge 0.
    \end{equation*}
\item[$\textbf{(A2)}$] The diffusion terms  $\si= (\si^{ij})$ and  $ \bar\si=(\bar\si^{ij})$ satisfy $\si= \bar\si$.  Moreover, $\sigma^{ij}(t,\xi)$ only depends on $t$ and $\xi^{i}(0)-N^{i}(\xi), $ for any $1\leq i\leq d$, $1\leq j\leq m$, $\xi\in \C$.
\end{enumerate}
Then the $G$-NSFDE  \eqref{E1} is $N$-order-preserving.
\end{thm}

Before proceeding to the proof of Theorem \ref{T1.1}, we give the following  proposition. 
Without loss of generality, we assume $s = 0$ and omit the subscript $s$. Denote $Y(t)=Y(s,\xi;t),$ $\bar Y(t)=\bar Y(s,\bar\xi;t)$,
$Y^N(t)=(Y^{i,N}(t), \cdot\cdot\cdot, Y^{d,N}(t))=Y(t)-N(Y_t)$
and
$\bar{Y}^N(t)=(\bar Y^{i,N}(t), \cdot\cdot\cdot, \bar Y^{d,N}(t))=\bar{Y}(t)-N(\bar{Y}_t)$. 
 Let
\begin{equation}\begin{split}
\tau^i&=\inf\{t>0: Y^i(t)>\bar{Y}^i(t)\}, i=1, 2, \cdot  \cdot \cdot, d, \\
\label{tau_D}\tau_N^i&=\inf\{t>0: Y^{i,N}(t)>\bar{Y}^{i,N}(t)\}, i=1, 2, \cdot  \cdot \cdot, d.
\end{split}\end{equation}
\begin{prp}\label{prop1} (\cite[Proposition 4.3]{HY}) Let
$\tau=\min\{ \tau^1, \cdot  \cdot \cdot, \tau^d\}$ and
$\tau_N=\min \{ \tau_N^1, \cdot  \cdot \cdot, \tau^d_N\}.$
Assume  {\bf (H3)} and {\bf (H4)}, then
    $\tau_N\leq \tau  \ {\rm{on}} \ \Omega.$
\end{prp}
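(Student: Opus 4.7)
The statement is purely pathwise, so the plan is to fix $\omega \in \OO$ and regard $Y(\cdot,\omega)$, $\bar Y(\cdot,\omega)$ as continuous curves in $\R^d$. I would assume $\tau(\omega) < \infty$ (otherwise the claim is trivial) and argue by contradiction, supposing $\tau(\omega) < \tau_N(\omega)$. The auxiliary functions I would introduce are the one-sided excess and its running supremum
\[
\psi(t) := \max_{1 \leq i \leq d}(Y^i(t) - \bar Y^i(t))^+, \qquad \Psi(t) := \sup_{s \in [-r_0, t]} \psi(s),
\]
both continuous and non-negative, with $\Psi$ non-decreasing. Combining $\xi \leq \bar\xi$ with the definition of $\tau$ gives $\psi \equiv 0$ on $[-r_0, \tau]$, so $\Psi(\tau) = 0$. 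Conversely, picking $i^*$ with $\tau^{i^*} = \tau$ and using that the set $\{s > 0 : Y^{i^*}(s) > \bar Y^{i^*}(s)\}$ is open, the infimum definition supplies $s_n \downarrow \tau$ with $\psi(s_n) > 0$, so $\Psi(t) > 0$ strictly for every $t > \tau$.

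Next I would derive the core estimate on $[0, \tau_N)$: the defining property of $\tau_N$ rewrites as $Y^i(t) - \bar Y^i(t) \leq N^i(Y_t) - N^i(\bar Y_t)$ for every $1 \leq i \leq d$. To bound the right-hand side by $\kappa \Psi(t)$ rather than by $\kappa \max_j \|Y^j_t - \bar Y^j_t\|_\infty$ (which would involve unwanted two-sided oscillations), I would route the difference through the componentwise minimum $\eta_t := Y_t \land \bar Y_t \in \C$ and split
\[
N^i(Y_t) - N^i(\bar Y_t) = \bigl[N^i(Y_t) - N^i(\eta_t)\bigr] + \bigl[N^i(\eta_t) - N^i(\bar Y_t)\bigr].
\]
Since $\eta_t \leq \bar Y_t$, \textbf{(H3)} makes the second bracket non-positive. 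For the first, \textbf{(H4)} yields $|N(Y_t) - N(\eta_t)| \leq \kappa \max_j \|Y^j_t - \eta^j_t\|_\infty$, and the pointwise identity $Y^j_t - (Y^j_t \land \bar Y^j_t) = (Y^j_t - \bar Y^j_t)^+$, together with swapping the finite $\max_j$ with $\sup_u$, reduces this further to $\sup_{s \in [t - r_0, t]} \psi(s) \leq \Psi(t)$. Taking positive parts and $\max_i$ then delivers $\psi(t) \leq \kappa \Psi(t)$ for all $t \in [0, \tau_N)$.

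To close the argument I would pick any $t \in (\tau, \tau_N)$. For $s \in [-r_0, \tau]$ we have $\psi(s) = 0$, while for $s \in (\tau, t]$ the previous estimate and monotonicity of $\Psi$ give $\psi(s) \leq \kappa \Psi(s) \leq \kappa \Psi(t)$. Taking $\sup_s$ forces $\Psi(t) \leq \kappa \Psi(t)$, and since $\kappa < 1$ this gives $\Psi(t) = 0$, contradicting the strict positivity established in the first step.

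The main obstacle will be turning \textbf{(H4)}, which is an absolute-value Lipschitz estimate, into a bound involving only the one-sided quantity $\Psi(t)$. The lattice operation $Y_t \land \bar Y_t$ is the device that accomplishes this: on one piece of the split the monotonicity \textbf{(H3)} kills the sign, while on the other piece the argument of the Lipschitz norm reduces exactly to $(Y^j - \bar Y^j)^+$, so the contraction constant $\kappa < 1$ can be leveraged on $\Psi$ without being polluted by the "wrong-direction" part $(\bar Y - Y)^+$.
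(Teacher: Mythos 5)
Your proof is correct, and it is worth noting that the paper itself does not actually prove this proposition: it simply remarks that the hypotheses on $N$ coincide with those of \cite{HY} and defers entirely to \cite[Proposition 4.3]{HY}. So you have supplied a complete, self-contained argument where the paper gives none. Each step checks out: the standing assumption $\xi\le_N\bar\xi$ (implicit in the proposition's context) gives $\psi\equiv 0$ on $[-r_0,\tau]$; the openness of $\{s>0:Y^{i^*}(s)>\bar Y^{i^*}(s)\}$ gives $\Psi(t)>0$ for $t>\tau$; and on $[0,\tau_N)$ the splitting of $N^i(Y_t)-N^i(\bar Y_t)$ through the lattice minimum $\eta_t=Y_t\land\bar Y_t$ correctly isolates a term killed by the monotonicity in {\bf (H3)} and a term controlled by {\bf (H4)} via the pointwise identity $Y^j_t-(Y^j_t\land\bar Y^j_t)=(Y^j_t-\bar Y^j_t)^+$, yielding $\psi(t)\le\kappa\Psi(t)$ and hence $(1-\kappa)\Psi(t)\le 0$ on $(\tau,\tau_N)$, the desired contradiction. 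The only points worth making explicit in a final write-up are that $\Psi(t)<\infty$ (continuity of the paths on compact intervals), which is needed to cancel $\Psi(t)$ against $\kappa\Psi(t)$, and that the argument does use the initial ordering $\xi\le\bar\xi$ from $\xi\le_N\bar\xi$, without which the statement can fail; neither is a gap so much as a sentence to add. The device you identify as the crux --- routing the one-sided comparison through $Y_t\land\bar Y_t$ so that the two-sided Lipschitz bound of {\bf (H4)} only ever sees the positive part $(Y-\bar Y)^+$ --- is exactly the mechanism that makes the contraction constant $\kappa<1$ usable here, and it is the same lattice trick that reappears later in the paper in the construction of $Y^Z_t$.
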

\begin{proof} As we set conditions on the neutral term $N$ is same with    \cite{HY}, 
  we omit the proof.
\end{proof}
\begin{rem}\label{rem1} (\cite[Remark 4.4]{HY}) The following condition {\bf(C1)} and {\bf(C2)} are equivalent.
\begin{itemize}
  \item [\bf(C1)] {\bf(H1)} together with Theorem \ref{T1.1}   $\textbf{(A2)}$.
  \item  [\bf(C2)] $\si=\bar{\si}$, and there exists a constant $L>0$ such that for any
   $i = 1,\cdot\cdot\cdot, n,$
  \begin{align*}
     \sum_{j=1}^m&\left(| {\si}_{ij}(t,\xi)- {\si}_{ij}(t,\eta)|^2+|\bar{\si}_{ij}(t,\xi)- \bar{\si}_{ij}(t,\eta)|^2\right)\\
     &\le L|\xi^i(0)-N^i(\xi)-\eta^i(0)+N^i(\eta)|^2, \ t\geq 0, \ \xi, \eta\in\C.
  \end{align*}
\end{itemize}
\end{rem}
\begin{proof}[$\textbf{Proof of Theorem \ref{T1.1}}$]
We first prove the result under  the following condition $\textbf{(A1')}$ instead of  $\textbf{(A1)}$.
 \beg{enumerate}\item[$\textbf{(A1')}$] For any  $1\leq i\leq d$, $\xi,\eta\in\C$ with $\xi\leq_N \eta$ and $\xi^i(0)-N^i(\xi)=\eta^i(0)-N^i(\eta)$,
 \begin{equation*}
  b^i(t,\xi)-\bar{b}^i(t, \eta)+2G(h^i(t,\xi)-\bar{h}^i(t, \eta))<0,\ \ \text{a.e.}\ t\ge 0.
\end{equation*}
\end{enumerate}
Assume  {\bf(H1)}-{\bf(H4)}, and let conditions $\textbf{(A1')}$ and $\textbf{(A2)}$ hold.
We aim to prove that
$ \mathcal{C}\{Y_t\leq_N\bar Y_t,\ \ t\in[0,T]\}=1,$
which equal to prove that for any $1\le i\le d$,
\begin{equation}\label{r1}
 \mathcal{C}\{Y^{i,N}(t)>\bar Y^{i,N}(t),\ \ t\in[0,T]\}=0.
\end{equation}
By the definition of $ \tau_N^i, $ this equal to    prove that for any $1\le i\le d$,
$\mathcal{C}\{\tau_N^i=\infty \}=1.$
It suffices to prove
$\mathcal{C}\{\tau_N =\infty \}=1.$
In fact, if this holds, by Proposition \ref{prop1},  it holds that
$\mathcal{C}\{\tau=\infty 
\}=1,$
i.e., for any $1\le i\le d$,
\begin{equation}\label{r2}
     \mathcal{C}\{Y^i(t)>\bar Y^i(t),\ \ t\in[0,T]\}=0.
\end{equation}
 Combining with \eqref{r1}  and  \eqref{r2}, so the $N$-order preservation holds.
To this end, let us assume the contrary. If there exists some $M>0$  such that
$\mathcal{C}\{\tau_N<M\}>0.$ It follows the definition of  $\tau_N$ that
there exists an $i\in \{ 1, \cdot  \cdot \cdot, d\}$ such that
$\mathcal{C}(A)>0, \ {\rm{for}} \  A=\{\tau_N^i=\tau_N<M\}.$
For any $t\geq0$, let
$Y_t^Z=({Y}^{1}_{t}, \cdot\cdot\cdot,{Y}^{i-1}_{t}, {Y}^{i}_{t}-(Z^{i,N}_t)^+, {Y}^{i+1}_{t},\cdot\cdot\cdot, {Y}^{d}_{t}),$ where $(Z^{i,N}_t)^+ = \{(Y_t^{i,N}-\bar Y_t^{i,N}) \vee 0\}.$
Set
 \begin{align}\label{tauH}
 \tau_H^i&=\inf\{t>\tau_N^i: H^i(t)>0\},
 \end{align}
 where $H^i(t)=b^i(t,  Y_t^Z)-\bar b^i(t,\bar Y_t)
 +2G[h^i(t,Y_t^Z)-\bar h^i(t,\bar Y_t)].$
We claim that $\tau_H^i >\tau_N^i$. In fact, by the definition of $\tau_H^i$, we know that
$\tau_H^i\geq\tau_N^i$. If $\tau_H^i=\tau_N^i$, it follows  the definition of $\tau_N^i$ that
$$
Y^{i,N}(\tau_N^i)-\bar Y^{i,N}(\tau_N^i)=0, \ Y_{\tau_N^i}^{i,N}(r)-\bar Y_{\tau_N^i}^{i,N}(r)<0, \ r\in [-r_0,0),
$$
thus, $(Z^{i,N}_{\tau_N^i})^+\equiv0$. Moreover, by Proposition \ref{prop1}, we have $\tau_N\leq \tau^i$, $i=1,\cdot\cdot\cdot, d$, thus
$Y_{\tau_N^i}\leq\bar Y_{\tau_N^i} $  and  $Y^{N}(\tau_N^i)\leq\bar Y^{N}(\tau_N^i). $
Letting $t=\tau_N^i, \xi=Y_{\tau_N^i}^Z(=Y_{\tau_N^i}), \eta=\bar Y_{\tau_N^i}$ in the condition $\textbf{(A1')}$, we have $H^i(\tau_N^i)=H^i(\tau_H^i)< 0.$
This would  be in contradiction with  \eqref{tauH}!
Therefore, the claim holds true. Furthermore,
$H^i(t)\leq 0 $
 for all $t\in [\tau_N^i, \tau_H^i]$. 

Now,
we   take  the following $C^2$-approximation of $s^+$  as   \cite[ Theorem 1.1]{HW}.
For any $n\ge 1$,   construct $\psi_n: \R\to [0,\infty)$  as follows: $\psi_n(s)=\psi_n'(s)=0$ for $s\in (-\infty,0]$, and
$$\psi_n''(s)=\beg{cases} 4n^2s, & s\in [0,\ff 1 {2n}],\\
-4n^2(s-\ff 1 n), & s\in [\ff 1 {2n}, \ff 1 n],\\
0, &\text{otherwise}.\end{cases}$$
One can see that
\beq\label{1.3}
0\le \psi_n'\le 1_{(0,\infty)}, \ \text{and\ as\ } n\uparrow\infty: \ 0\le \psi_n(s)\uparrow s^+,\ \ s\psi_n''(s)\le  1_{(0,\ff 1 n)}(s)\downarrow 0.
\end{equation}
Noting  that
 $\psi_n(Y^{i,N}(0)-\bar Y^{i,N}(0))=\psi_n(\xi^i(0)-\eta^i(0)-N(\xi^i)+N(\eta^i))=0,$
 $\si=\bar\si$,   and
 \begin{align*}
\d (Y^{i,N}(t)-\bar Y^{i,N}(t))=&    (b^i(t,Y_t)-b^i(t,\bar{Y}_t))\d t+ \<h^i(t,Y_t)-\bar{h}^i(t,\bar{Y}_t),\d \<B\>(t)\> \\
 &+ \sum_{j=1}^m (\si^{ij}(t, Y_t)-\si^{ij}(t,\bar{Y}_t))\d B^j(t ),
\end{align*}
we then have from It\^o's  formula that
\beq\label{1.4}
\beg{split}
&\psi_n(Y^{i,N}(t\wedge\tau_H^i)-\bar Y^{i,N}(t\wedge\tau_H^i))^2\\
=&2\sum_{j=1}^m \int_{0}^{t\wedge\tau_H^i} (\si^{ij}(s, Y_s)- \si^{ij}(s,\bar Y_s))\{\psi_n\psi_n'\}(Y^{i,N}(s)-\bar Y^{i,N}(s))\d B^j(s)\\
&+2\int_{0}^{t\wedge\tau_H^i} \{\psi_n\psi_n'\}(Y^{i,N}(s)-\bar Y^{i,N}(s))\<h^i(s,Y_s)-\bar h^i(s,\bar Y_s),\d \<B\>(s)\> \\
&+2\int_{0}^{t\wedge\tau_H^i} (b^i(s,Y_s)-\bar b^i(s,\bar Y_s))\{\psi_n\psi_n'\}(Y^{i,N}(s)-\bar Y^{i,N}(s))\d s\\
&+  \sum_{j=1,k=1}^m \int_{0}^{t\wedge\tau_H^i}\{ \psi_n\psi_n''+\psi_n'^2\}(Y^{i,N}(s)-\bar Y^{i,N}(s))\\
&\qquad\qquad\qquad\times  (\si^{ij}(s,Y_s)- \si^{ij}(s,\bar Y_s))(\si^{ik}(s,Y_s)- \si^{ik}(s,\bar Y_s))\d \<B\>_{jk}(s)\\
 &=M_i(t\wedge\tau_H^i)+\bar{M}_i(t\wedge\tau_H^i)+I_1+I_2
 \end{split}\end{equation}for any $n\ge 1$, $1\le i\le d$ and $t\ge 0,$
where
\begin{align*}
&M_i(t\wedge\tau_H^i):= 2\sum_{j=1}^m \int_{0}^{t\wedge\tau_H^i} (\si^{ij}(s, Y_s)- \si^{ij}(s,\bar Y_s))\{\psi_n\psi_n'\}(Y^{i,N}(s)-\bar Y^{i,N}(s))\d B^j(s),\\
&\bar{M}_i(t\wedge\tau_H^i):=2\int_{0}^{t\wedge\tau_H^i} \{\psi_n\psi_n'\}(Y^{i,N}(s)-\bar Y^{i,N}(s))\<(h^i(s,Y_s)-\bar h^i(s,\bar Y_s)),\d \<B\>(s)\> \\
&\qquad\qquad-4\int_{0}^{t\wedge\tau_H^i} G[\{\psi_n\psi_n'\}(Y^{i,N}(s)-\bar Y^{i,N}(s))(h^i(s,Y_s)-\bar h^i(s,\bar Y_s))]\d s,\\
&I_1:=2\int_{0}^{t\wedge\tau_H^i} (b^i(s,Y_s)-\bar b^i(s,\bar Y_s))\{\psi_n\psi_n'\}(Y^{i,N}(s)-\bar Y^{i,N}(s))\d s\\
&\qquad\qquad+4\int_{0}^{t\wedge\tau_H^i} G[\{\psi_n\psi_n'\}(Y^{i,N}(s)-\bar Y^{i,N}(s))(h^i(s,Y_s)-\bar h^i(s,\bar Y_s))]\d s,\\
&I_2:=\sum_{j=1,k=1}^m \int_{0}^{t\wedge\tau_H^i}\{ \psi_n\psi_n''+\psi_n'^2\}(Y^{i,N}(s)-\bar Y^{i,N}(s))\\
&\qquad\qquad\qquad\times  (\si^{ij}(s,Y_s)- \si^{ij}(s,\bar Y_s))(\si^{ik}(s,Y_s)- \si^{ik}(s,\bar Y_s))\d \<B\>_{jk}(s).
\end{align*}
For simplicity, let $\Phi^n_N(s)=\{\psi_n\psi_n'\}(Y^{i,N}(s)-\bar Y^{i,N}(s))$, $s \in[0,T]$.  Due to $\Phi^n_N(s)\geq 0$, $s \in[0,T]$ and the property (a) of $G$, for any $ n\geq1, \ s \in[0,T]$, we have
\begin{equation}\label{Gp}
G[\{\psi_n\psi_n'\}(Y^{i,N}(s)-\bar Y^{i,N}(s))(h^i(s,Y_s)-\bar h^i(s,\bar Y_s))] =\Phi^n_N(s)G(h^i(s,Y_s)-\bar h^i(s,\bar Y_s)).
\end{equation}
Moreover, note that  $0\le\psi_n'(Y^{i,N}(s)-\bar Y^{i,N}(s))\le 1_{\{Y^{i,N}(s)>\bar Y^{i,N}(s)\}}$. By \eqref{tauH},
it holds that
\begin{equation}\begin{split}\label{le}
H^i(s)\Phi^n_N(s)\leq0,    \ s\in [0, \tau_H^i].
\end{split}\end{equation}
{\bf{Step 1. Estimate the term $I_1$:}}
 Combining \eqref{Gp} and \eqref{le}, {\bf (H1)},   $0\le\psi_n'\le 1$ and properties (b) and (c) of $G$,
 there exists a constant $C(T,\bar{\sigma})> 0  $ depends on $T$ and $\bar{\sigma}$ such that for any $n \geq1,$   $\ t\in[0,T]$
\beg{align*}
I_1&=2\int_{0}^{t\wedge\tau_H^i}[b^i(s,Y_s)-\bar b^i(s,\bar Y_s)+2G(h^i(s,Y_s)-\bar h^i(s,\bar Y_s))]\Phi^n_N(s)\d s\\
& = 2\int_{0}^{t\wedge\tau_H^i}[b^i(s,Y_s)-b^i(s, Y_s^Z)+2G(h^i(s,Y_s)-h^i(s, Y_s^Z)]\Phi^n_N(s)\d s\\
&\quad+2\int_{0}^{t\wedge\tau_H^i} [b^i(s,Y_s^Z)-\bar b^i(s,\bar Y_s)+2G(h^i(s,Y_s^Z)-\bar h^i(s,\bar Y_s))]\Phi^n_N(s)\d s\\
&\leq2\int_{0}^{t\wedge\tau_H^i}[b^i(s,Y_s)-b^i(s,Y_s^Z)+2G(h^i(s,Y_s)-h^i(s,Y_s^Z))]\Phi^n_N(s)\d s\\
&\leq\int_{0}^{t\wedge\tau_H^i}[b^i(s,Y_s)-b^i(s,Y_s^Z)+2G(h^i(s,Y_s)-h^i(s,Y_s^Z))]^2\d s\\
&\qquad +\int_{0}^{t\wedge\tau_H^i}\psi_n(Y^{i,N}(s)-\bar Y^{i,N}(s))^2\d s\\
&\leq\int_{0}^{t\wedge\tau_H^i}C(T,\bar{\sigma})| (Z^{i,N}_s)^+|^2\d s+\int_{0}^{t\wedge\tau_H^i}\psi_n(Y^{i,N}(s)-\bar Y^{i,N}(s))^2\d s\\
&\leq\int_{0}^{t\wedge\tau_H^i}C(T,\bar{\sigma})| (Y_s^{i,N}-\bar Y_s^{i,N})^+|^2\d s.
\end{align*}
{\bf{Step 2.  Estimate the term $I_2$:}}
  From 
  \eqref{1.3}, {\bf (H1)}, Remark \ref{rem1} and \eqref{B}, we get
\beq\label{1.6} \beg{split}
I_2&\leq  \int_{0}^{{t\wedge\tau_H^i}} \left(1_{\{Y^{i,N}(s)-\bar Y^{i,N}(s)\in (0,\ff 1 n)\}}+1_{\{Y^{i,N}(s)-\bar Y^{i,N}(s)\in (0,\infty)\}}\right)\\
 &\qquad\qquad \times \sum_{j=1,k=1}^m(\si^{ij}(s,Y_s)- \si^{ij}(s,\bar Y_s))(\si^{ik}(s,Y_s)- \si^{ik}(s,\bar Y_s))\d \<B\>_{jk}(s) \\
&\leq  \int_{0}^{{t\wedge\tau_H^i}} C(T,\bar{\sigma})|(Y^{i,N}(s)- \bar Y^{i,N}(s))^+|^2\d s,\ \ n\ge 1, \ t\in [0,T].
\end{split}\end{equation}
{\bf{Step 3.  Estimate the term ${M}_i(t\wedge\tau_H^i)$:}}
 Since $M_i$ is  a  $G$-martingale, we have
 $$\bar{\E}^G  {M}_i(t\wedge\tau_H^i)= 0, \ t\in [0, T].$$
{\bf{Step 4.  Estimate the term $\bar{M}_i(t\wedge\tau_H^i)$:}}
Note that  $\bar{M}_i$ is a non-increasing $G$-martingale, we obtain from \eqref{Gp} that  $\bar{\E}^G\bar{M}_i(t\wedge\tau_H^i)\leq  0, \ t\in [0, T].$

Therefore, for any $n\ge 1$ and $t\in [0,T  ]$,  there exists a constant $C>0$ such that
\beg{equation*}\beg{split} 
\bar{\E}^G[ {1}_{A} \psi_n(Y^{i,N}(t\wedge\tau_H^i)-\bar Y^{i,N}(t\wedge\tau_H^i))^2]
&\leq C\bar{\E}^{G} {1}_{A}\int_{0}^{{t\wedge\tau_H^i}}|(Y^{i,N}(s)-\bar Y^{i,N}(s))^+|^2\d s\\
&\le C\int_{0}^{t \wedge\tau_H^i}\bar{\E}^G( {1}_{A}|(Y^{i,N}(s)-\bar Y^{i,N}(s))^+|^2)\d s.
\end{split}
\end{equation*}
 Letting  $n\uparrow\infty$, by the monotone convergence theorem in \cite[Theorem 6.1.14]{peng4}, we have
 \begin{align*}
 & \bar{\E}^G \{ {1}_{A}((Y^{i,N}(t \wedge\tau_H^i)-\bar Y^{i,N}(t \wedge\tau_H^i))^+)^2\}\le C  \int_{0}^{t\wedge\tau_H^i}  \bar{\E}^G[ {1}_{A}|(Y_{s}^{i,N}-  \bar Y_{s}^{i,N})^+|^2]\d s.
 \end{align*}
 By   Gronwall's inequality and \eqref{BDD},  we arrive at
$$
\bar{\E }^G[ {1}_{A}|(Y^{i,N}({t\wedge\tau_H^i})-  \bar Y^{i,N}({t\wedge\tau_H^i}))^+|^2]=0,
\ t\in [0, T].
$$
 This yields that
  $Y^{i,N}({t\wedge\tau_H^i})\leq \bar Y^{i,N}({t\wedge\tau_H^i})$
  on $A=\{\tau_N^i=\tau_N<M\}$ for all    $t\in [\tau_N^i, T]\subset [0,T]$,
 which is contradicts with the definition of $\tau_N^i$ in  \eqref{tau_D} for $\tau_H^i >\tau_N^i$.
 Therefore, we prove that for every $M>0$,
$\mathcal{C}\{\tau_N<M\}=0$, i.e., $\mathcal{C}\{\tau_N=\infty\}=1$.

  In general, if $\textbf{(A1)}$ in Theorem \ref{T1.1} holds, let $\bar{b}_\epsilon=\bar{b}+\vec{\epsilon},$  where $\vec{\epsilon}=(\epsilon,\cdot\cdot\cdot,\epsilon)\in\R^d,$  $\epsilon>0$.
Let $\bar{Y}^\epsilon(t)$ solve \eqref{E1} with  $\bar{Y}^\epsilon_0=\bar{Y}_0$ and the drift term $ \bar{b}_\epsilon $ instead of $b$. It is  easy to deduce under the same conditions in Theorem \ref{T1.1} that
\begin{equation}\label{epsilon}
  \lim_{\epsilon \to 0^+}\bar{\E }^G\sup_{t\in[0,T]}|\bar{Y}^\epsilon(t)-\bar{Y} (t)|=0.
\end{equation}
Note that $\bar{b}_\epsilon $ satisfies condition    $\textbf{(A1')}$, by the above discussion, we have $\mathcal{C}$-q.s.
 $$Y_t\leq_N\bar Y_t^\epsilon,\   t\in[0,T]. $$
Letting $\epsilon\rightarrow 0,$ it follows from \eqref{epsilon} and the continuity of $N$ that
$\mathcal{C}$-q.s.
$\mathcal{C}$-q.s.
 $$Y_t\leq_N\bar Y_t,\   t\in[0,T]. $$
\end{proof}
The following result shows that   the sufficient    conditions  appeared in Theorem \ref{T1.1}  are also necessary if all coefficients are continuous on $[0,\infty)\times\C$.
\beg{thm}\label{T1.2} Let {\bf (H1)}-{\bf (H4)} hold and     $\eqref{E1}$ be $N$-order-preserving. Moreover, if $b, h, \si$ and $\bar b, \bar h,\bar \si$ are continuous on $[0,\infty)\times \C$,
then conditions $\textbf{(A1)}$ and $\textbf{(A2)}$ in Theorem \ref{T1.1} hold.
\end{thm}

\begin{proof}[$\textbf{Proof of (A1)}$] We use the representation theorem  as explained in      \cite[Theorem 3.3]{HYang}.  Let $1\leq i\leq d$ be fixed.
For any $t_0\ge 0$ and $\xi, \eta\in \C$ with $\xi\le_N\eta$ and $\xi^i(0)-N^i(\xi)=\eta^i(0)-N^i(\eta)$, it holds $\mathcal{C}$-q.s.
\begin{align}
\label{com}Y(t_0,\xi)_t\leq_N \bar{Y}(t_0,\eta)_t,\ \ t\geq 0.
\end{align}
 Denote $Y(t)=Y(t_0,\xi;t)$, $\bar Y(t)=\bar Y(t_0,\eta;t)$, $Y^N(t)=Y(t)-N(Y_t)$ and  $\bar{Y}^N(t)=\bar{Y}(t)-N(\bar{Y}_t)$. Recalling the expression \eqref{G(A)},
for any $\gamma\in \mathbb{S}_+^m\bigcap[\underline{\sigma}^2\textbf{I}_{m\times m}, \bar{\sigma}^2\textbf{I}_{m\times m}]$, taking   $\theta_s=\sqrt{\gamma},s\geq 0$, we have $\P_{\gamma}$-a.s. $\<B\>(r)=r\gamma$.  Accordingly, 
by   \eqref{E1}  and \eqref{com}, for any $s\ge 0$, we obtain $\P_\gamma$-a.s.
\beq\label{XA}\beg{split}
0&\ge Y^{i,N}(t_0+s)-\bar Y^{i,N}(t_0+s)   =\xi^i(0)-N(\xi^i)-\eta^i(0)+N(\eta^i)\\
 &\quad +   \int_{ t_0}^{ t_0+s}[b^i(r,Y_r)-\bar b^i(r,\bar Y_r)]\,\d r+\int_{ t_0}^{ t_0+s}\<h^i(r,Y_r)-\bar h^i(r,\bar Y_r),\d\<B\>(r)\> \\
&\quad +  \sum_{j=1}^m \int_{ t_0}^{ t_0+s}
[\sigma^{ij}(r,Y_r)-\bar \sigma^{ij}(r,\bar Y_r)]\,\d B^j(r)\\
&=\int_{ t_0}^{ t_0+s}[b^i(r,Y_r)-\bar b^i(r,\bar Y_r)]\,\d r+ \int_{ t_0}^{ t_0+s}\<h^i(r,Y_r)-\bar h^i(r,\bar Y_r),\gamma\>\d r \\
&\quad +  \sum_{j=1}^m  \int_{ t_0}^{ t_0+s}[\sigma^{ij}(r,Y_r)-\bar \sigma^{ij}(r,\bar Y_r)]\,\d B^j(r).
\end{split}\end{equation}
Letting $\E_{\P_\theta}=\E_{\P_\gamma}$ in \eqref{rep2}, taking expectation in \eqref{XA} under $\P_\gamma$, by Remark \ref{remboun},     we
have
\begin{equation}\begin{split}\label{sf}
 \frac{1}{s}\int_{ t_0}^{ t_0+s}\E_{\P_\gamma}\{[b^i(r,Y_r)-\bar b^i(r,\bar Y_r)]+\< h^i(r, Y_r)-\bar{h}^i(r,\bar{Y}_r),\gamma\>\}\d r\leq 0,\ \ s>0.
\end{split}\end{equation}
Letting $s\downarrow 0$ in \eqref{sf}, it follows from \eqref{rep2}, \eqref{BDD}, the continuity of $b,\bar{b},h,\bar{h}$ and dominated convergence theorem that
\begin{equation*}\begin{split}
[b^i( t_0,\xi)-\bar b^i( t_0,\eta)]+\<h^i( t_0,\xi)-\bar h^i(t_ 0,\eta),\gamma\>\leq 0.
\end{split}\end{equation*}
In terms of  the definition of $G$ in \eqref{G(A)}, we get
\begin{equation*}\begin{split}
&[b^i( t_0,\xi)-\bar b^i( t_0,\eta)]+2G(h^i( t_0,\xi)-\bar h^i( t_0,\eta))\\
=&[b^i( t_0,\xi)-\bar b^i( t_0,\eta)]+2\sup_{\gamma\in \mathbb{S}_+^m\bigcap[\underline{\sigma}^2\textbf{I}_{m\times m}, \bar{\sigma}^2\textbf{I}_{m\times m}]}\frac{1}{2}\<h^i( t_0,\xi)-\bar h^i( t_0,\eta),\gamma\>\leq 0.
\end{split}\end{equation*}
 {Then \textbf{(A1)} holds.}
\end{proof}

\beg{proof}[\textbf{Proof of (A2)}] For any  $\xi,\bar{\xi}\in\C$ with $\xi\leq_N\bar{\xi}$, it holds $\mathcal{C}$-q.s.
$$Y_t\leq_N \bar{Y}_ t, \ \ t\geq  0.$$
Now, letting $\theta_s=\bar{\sigma}$, by \eqref{rep2},  we have $\P_\theta$-a.s.
$Y_t\leq_N \bar{Y}_t,    t\geq  0.$ Noting that $\P_\theta$-a.s. $\<B\>(r)=\bar{\sigma}^2r$ and $\P_{\theta}$ is the law of $\int_0^\cdot \theta_s \d W^0_s$, thus \eqref{E1} comes back to the SDE driven by classic Brownian motion under $\P_\theta$. According to the necessary condition of order preservation for functional SDEs  with distribution independent in  \cite[Theorem 4.8]{HY}, we prove \textbf{(A2)}.
\end{proof}

\end{document}